\newtheorem{thm}{Theorem}
\newtheorem{lem}[thm]{Lemma}
\begin{document}

\begin{center}
\hspace*{-8mm}\mbox{\Large Numerical solution to the  Neumann problem in a Lipschitz domain,} \\
{\Large based on random walks}\\[3mm]

{Oana Lupa\c scu-Stamate}\footnote{Institute of Mathematical Statistics and Applied Mathematics of the Romanian Academy,  
Calea 13 Septembrie 13, Bucharest, Romania.
 E-mail: oana.lupascu$\_$stamate@ismma.ro, \\ $https://orcid.org/0009-0003-1787-5129$} and 
 {Vasile St\u anciulescu}\footnote{ 
 E-mail: vasile.nicolae.stanciulescu@gmail.com, Bucharest, Romania.}
\end{center}

\vspace{2mm}

\begin{abstract}
We deal  with probabilistic numerical solutions for linear elliptic equations with
Neumann boundary conditions in a Lipschitz domain, by using a probabilistic numerical scheme introduced by
Milstein and Tretyakov based on new numerical layer methods.
\end{abstract}
\noindent
{\bf Keywords:} 
Neumann problem, numerical solution,  probabilistic numerical scheme.

\vspace{1mm}

\noindent
{\bf Mathematics Subject Classification (2010):} 
35J25,     
65C05,    
65C30,     
65Nxx,     
65N75,     
60J50,    
60H30,      
60J65.  

\section{Introduction}

\bigskip

Getting the solution of boundary value problems for partial differential
equations (PDEs) is a general topic in applied mathematics. Analytical
methods (e.g., separation of variables and transform techniques) are valued
for their exactness and the insight they provide; however, the range of
problems they solve is limited. Numerical schemes (e.g., finite element,
finite difference, and spectral methods) solve a much wider range of
problems. Some methods (e.g., boundary integral methods) combine specific
analytical information about the solution with numerical approximations. 

 In this paper  we solve  numerically a class of linear elliptic equations with
Neumann boundary conditions in a Lipschitz domain, by using a probabilistic numerical scheme introduced by
Milstein and Tretyakov (cf.  \cite{MT02} and \cite{MT}) to solve the linear elliptic PDEs, together with some results obtained  in  \cite{BH}. 
 It is to be noted that as in \cite{MT} these methods are
probabilistic ones. 
Probabilistic representations of the solutions with new numerical layer methods for semilinear parabolic equations with Neumann conditions are constructed in \cite{MT02} and 
the convergence of the algorithm is proved. 
This technique was used in \cite{LuSt17} in order to give a probabilistic numerical approach for the nonlinear Dirichlet problem associated with a branching process.

Monte Carlo methods to compute the solution of elliptic equations with  Neumann boundary conditions are introduced in \cite{MaTa13}. 
The algorithm is based on the Euler scheme coupled with a local time approximation method and numerical examples are given on the Laplace operator on a square domain. Moreover, the authors have introduced a walk-on spheres approximation in the inhomogeneous Neumann boundary conditions.
The article \cite{RaBr20} points out some advances in probabilistic approximation methods to Neumann problem, including some numerical examples. 

A gradient Newton Galerkin numerical method to obtain a positive solution to an elliptic equation with Neumann boundary condition is introduced in \cite{AfMaNa07}
while discrete boundary element methods with error estimates for Neumann problems for the heat equation are presented in \cite{Ya99}. The authors in \cite{AgAe21} develop a numerical method to solve Neumann problem with discontinuous coefficients,  which  is illustrated through numerical examples with given exact solutions. 
In \cite{LiShGo07}  it is developed a wavelet method with explicit $C^1$ wavelet bases on Powell-Sabin triangulations to approximate the solution of the Neumann boundary problem for partial differential equations. In \cite{JuMa06} a modified implicit prediction domain decomposition algorithm is used to solve parabolic partial differential equations with Neumann boundary conditions. 

The structure of the paper is the following. 
Section 2  presents several preliminary results from \cite{BH}, on the Neumann boundary value problem in a bounded Lipschitz domain.  
In particular, Theorem \ref{thm1}  gives the probabilistic representation of the solution to the Neumann problem
by Brosamler’s formula.
This  provides the existence of a unique generalized solution to the Neumann problem. 
Next, we construct a Markov chain approximation for the solution using a boundary layer method inspired by \cite{MT} and we write down the corresponding algorithm. We give a convergence result in the dimensional space $d\geq 5$ for a function $f\in C^4(\bar{O})$, see Theorem \ref{thm4}. Finally, we presented same final remarks on our results.

\section{The Neumann problem in a Lipschitz domain}\label{sect2}

{

\noindent
{\bf Setting of the problem and existence result.} 
Consider  the following Neumann boundary value problem
on a bounded Lipschitz domain $O$

\begin{equation}\label{Ddpb}
  \left\{
 \begin{aligned}
 \Delta w =0 \quad\text{in}\text{}\text{
}\text{ }O, \; \; \\[1mm] 
{\hspace*{2mm}\frac{ \partial w}{ \partial \nu } =f \text{}\text{}\ \text{ on } \ \partial O,} \\
      \end{aligned}
    \right.
\end{equation}
where
$\frac{\partial\ }{\partial \nu}$ is the outward normal derivative to the boundary $\partial O$ of $O$,
$f$ is a bounded measurable functions on $\partial O$, 
and we assume that
$\int_{\partial O} f d\sigma=0$; we denoted by $\sigma$ the surface measure on the boundary $\partial O$.
Recall that this is a necessary
condition for the existence of a solution to the problem (\ref{Ddpb}), according to the Green’s first identity.}

\vspace{2mm}

A {\it generalized solution to the Neumann boundary value problem $(\ref{Ddpb})$}   is a 
function $w\in C(\overline{O})$ 
such that  
$$
\int_O w(x)\Delta \varphi (x)dx+\int_{\partial O} f(x)\varphi (x)\sigma(dx)=\int_{\partial O} w(x)\dfrac{ \partial \varphi}{ \partial \nu }(x) \sigma(dx)\ 
\mbox{ for any } \varphi\in C^2(\overline{O})
$$
(cf. e.g.  Definition 5.1 from \cite{BH}).

We recall now the existence result of the generalized solution to the Neumann boundary value problem, its probabilistic representation using the reflecting Brownian motion $X$ on $O$, and some estimates for  the transition density $p(t,x,y)$  of the reflecting Brownian motion; cf. \cite{BH}, Theorem 5.3 and Theorem 3.1.

\begin{thm}\label{thm1}
 Let $O$ be a bounded Lipschitz domain. 
 Then the following assertions hold.
 
$(i)$  If $f\in\mathcal{B}(\partial O)$ with 
 $\int_{\partial O} f d\sigma=0$
 then there exists a unique generalized solution to the Neumann boundary problem \eqref{Ddpb} satisfying the condition 
${\int _{O}}w(x)dx =0$, where $\mathcal{B}(\partial O)$ is a Borel $\sigma$-algebra on $\partial O$. 
In addition we have   
\begin{equation}\label{eq2}
w(x) =\lim _{t \rightarrow \infty }\frac{1}{2}\mathbb{E}^{x}
{\textstyle\int _{0}^{t}}f(X_{s})dL_{s}\ 
\mbox{ for each } x\in O, 
\end{equation}
where $L_t$ is the boundary local time for the reflecting Brownian motion $X$.

$(ii)$ 
There exist two positive constants $C_1,C_2>0$ such that for all $t>0$ and $x,y\in\overline{O}$ we have
$$
p(t,x,y)\leq C_1\cdot t^{-\dfrac{d}{2}}\cdot e^{\dfrac{-|x-y|^2}{C_2 t}}.
$$
\end{thm}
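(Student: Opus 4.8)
The plan is to treat the two assertions separately, both built on the reflecting Brownian motion $X$ on $O$. First I would realize $X$ as the Hunt process associated with the Dirichlet form $\mathcal{E}(u,v)=\frac12\int_O\nabla u\cdot\nabla v\,dx$ with domain $H^1(O)$ in $L^2(O,dx)$; on a bounded Lipschitz domain this form is regular, and the associated semigroup $(P_t)$ is conservative and symmetric with respect to Lebesgue measure. The decisive structural input is the Skorohod-type decomposition $X_t=X_0+B_t+\int_0^t\nu(X_s)\,dL_s$, where $B$ is a $d$-dimensional Brownian motion and $L$ is the positive continuous additive functional whose Revuz measure is (a multiple of) the surface measure $\sigma$ on $\partial O$, i.e.\ the boundary local time. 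I would take this decomposition, and the identification of the Revuz measure of $L$ with $\sigma$, as the basic facts about reflecting Brownian motion on a Lipschitz domain.

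For $(i)$, well-posedness and the representation would follow from ergodicity. Since $O$ is bounded Lipschitz, the embedding $H^1(O)\hookrightarrow L^2(O)$ is compact, so the Neumann Laplacian has discrete spectrum with a spectral gap $\lambda_1>0$ above the simple eigenvalue $0$ spanned by the constants; hence $P_t g\to|O|^{-1}\int_O g\,dx$ exponentially fast. Applying Dynkin's formula to a test function $\varphi\in C^2(\overline O)$ along the decomposition yields $\mathbb{E}^x\varphi(X_t)-\varphi(x)=\frac12\mathbb{E}^x\!\int_0^t\Delta\varphi(X_s)\,ds-\mathbb{E}^x\!\int_0^t\frac{\partial\varphi}{\partial\nu}(X_s)\,dL_s$, and the Revuz correspondence identifies the two additive functionals with integration against $dx$ and against $\sigma$ respectively. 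The zero-flux hypothesis $\int_{\partial O}f\,d\sigma=0$ is exactly what forces the linearly growing part of $\mathbb{E}^x\int_0^t f(X_s)\,dL_s$ to cancel, so that the centered functional has a convergent potential; the spectral gap then guarantees the limit in \eqref{eq2} exists and is finite. I would define $w$ by that limit, verify through the same Dynkin identity that it satisfies the generalized-solution identity against every $\varphi\in C^2(\overline O)$, and obtain uniqueness by observing that two generalized solutions differ by a solution of the homogeneous Neumann problem, which by the energy identity (or by the representation with $f=0$) is constant, a constant pinned down by the normalization $\int_O w\,dx=0$.

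For $(ii)$, I would obtain the Gaussian upper bound by the classical Nash--Davies route. The on-diagonal bound $p(t,x,y)\le C\,t^{-d/2}$ comes from ultracontractivity: a bounded Lipschitz domain is a Sobolev extension domain, so the (defective) Nash inequality $\|u\|_2^{2+4/d}\le C\,\|u\|_1^{4/d}\bigl(\mathcal{E}(u,u)+\|u\|_2^2\bigr)$ holds, and the standard Nash argument converts this into $\|P_t\|_{1\to\infty}\le C\,t^{-d/2}$ for $t$ in a bounded range. The off-diagonal Gaussian factor I would extract by Davies' exponential-perturbation method: conjugating the semigroup by $e^{\psi}$ for a bounded Lipschitz weight $\psi$ and optimizing over $\psi$ produces the factor $e^{-|x-y|^2/(C_2t)}$, with the boundary entering only through a Gaffney-type integration-by-parts estimate that is insensitive to boundary smoothness because the reflecting generator carries a no-flux condition.

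I expect the main obstacle to be the interaction of the nonsmooth boundary with the fine tools just invoked: justifying the Skorohod decomposition and the identification of $L$ with surface measure on a merely Lipschitz domain, where $\nu$ exists only $\sigma$-almost everywhere, and ensuring the Davies perturbation and the boundary integration by parts survive the loss of boundary regularity. A subordinate point of care is that the bound stated in $(ii)$ is the short-time form of the heat-kernel estimate, consistent with the $t^{-d/2}$ scale coming from ultracontractivity; for large $t$ one instead uses $p(t,x,y)\to|O|^{-1}$, and correspondingly the convergence of the Brosamler functional in \eqref{eq2} must be argued from the spectral gap rather than from the Gaussian bound.
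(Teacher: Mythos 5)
The paper does not actually prove this theorem: it is quoted verbatim from Bass and Hsu (\cite{BH}, Theorems 5.3 and 3.1), so the only ``proof'' on offer is the citation. Your sketch is, in substance, a faithful reconstruction of the Bass--Hsu argument: construction of the reflecting Brownian motion from the regular Dirichlet form $\frac12\int_O\nabla u\cdot\nabla v\,dx$ on $H^1(O)$, identification of the boundary local time as the positive continuous additive functional with Revuz measure $\sigma$, the Nash/extension-domain route to ultracontractivity plus Davies' perturbation for the Gaussian factor in $(ii)$, and the spectral gap together with the zero-flux condition $\int_{\partial O}f\,d\sigma=0$ to make the Brosamler functional converge in $(i)$. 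Two remarks. First, you are right to flag that $(ii)$ as stated for \emph{all} $t>0$ cannot hold: since the semigroup is conservative on a bounded domain, $p(t,x,y)\to|O|^{-1}>0$ while $C_1t^{-d/2}e^{-|x-y|^2/(C_2t)}\to0$; Bass--Hsu state the bound only for $t$ in a bounded range, and the paper's later use of it on $(0,\infty)$ (in the proof of Theorem \ref{thm4}) inherits this defect. Second, the one genuine omission in your outline is the verification that $w$ defined by \eqref{eq2} belongs to $C(\overline O)$ rather than merely $C(O)\cap L^\infty$; on a Lipschitz domain this requires additional estimates on $\mathbb{E}^x\int_0^t f(X_s)\,dL_s$ near the boundary (uniform integrability of the local time up to $\partial O$), which is where a substantial part of the work in \cite{BH} lies. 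With that supplied, your route is the correct and essentially the only known one.
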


Relation (\ref{eq2}) is the {\it Brosamler formula}, the probabilistic representation of the solution of the Neumann problem, in terms of the reflecting Brownian motion;
see \cite{BP-Brosamler} for an approach to the Brosamler formula on balls, using the connection
between the Dirichlet and the Neumann boundary problems from \cite{BP}, and the explicit description of the reflecting Brownian motion and its boundary local time in terms of
the free Brownian motion. See also   \cite{BePaPaControl} for the case of discontinuous boundary data. \\

{}

\vspace{5mm}
\noindent
{\bf Numerical approximation of the solution in a Lipschitz domain.}
Now let 
\begin{equation}\label{eqq1}
Y_{t ,y ,h} \approx Y =y +h^{\frac{1}{2}}\xi  ,
\end{equation}
where $h >0$ is a step of integration and $\xi  =(\xi ^{1} , . . . ,\xi ^{n}$$)^{T}$, $\xi ^{i}$, $i =1 , . . . ,n$ are mutually independent random variables taking values $ \pm \frac{1}{2}$ with probability $\frac{1}{2}$.

We define a boundary zone $S_{h} \subset \overline{O} :y \in S_{h}$ if at least one of the $2^{n}$ values of the vector $Y$ is outside $\overline{O\,}$.

Let $\lambda>0$ be a constant such that if the distance from $y \in O$ to the boundary $\partial O$ is equal to or greater than $\lambda \sqrt{h}$ then $y$ is outside the boundary zone and therefore, for such $y$ all the realizations of the random variable $Y$ belong to $\overline{O\,}$.

Let $y \in S_{h}$ and  we construct the random vector $Y_{y ,h}^{\pi }$ taking two values $y^{\pi }$ and $y^{\pi } +\lambda \eta (y^{\pi })h^{\frac{1}{2}}$ with probabilities $p =p_{Y ,h\text{}}$ and $q =q_{Y ,h} =1 -p_{Y ,h}$ respectively, where
\begin{equation*}p_{Y ,h} : =\frac{h^{\frac{1}{2}}\lambda }{\vert y +h^{\frac{1}{2}}\lambda \eta (y^{\pi }) -y^{\pi }\vert } ,
\end{equation*}
$y^{\pi } \in  \partial O$ is the projection of the point y on the boundary $ \partial O$ and $\eta (y^{\pi })$ is the unit vector of the internal normal to $ \partial O$ at the point $y^{\pi } .$

Let $h : =\frac{1}{N} ,$ $N >0 ,$ integer.

Further, we approximate the solution to the Neumann boundary value problem \eqref{Ddpb}, by constructing a
Markov chain $Y_k$ which stops when it reaches the boundary $\partial O$ at a random step $\varkappa$.

We set $Y_{0}^{ \prime } =x .$ If $Y_{0}^{ \prime } \notin S_{h}$ we take 
$$Y_{0} =Y_{0}^{ \prime } .$$

If $Y_{0}^{ \prime } \in S_{h}$ then the random variable $Y_{0}$ takes two values: either $Y_{0}^{ \prime \pi } \in  \partial O$ with probability $p_{Y_{0}^{ \prime \pi }}$ or $Y_{0}^{ \prime \pi } +h^{\frac{1}{2}}\lambda \eta (Y_{0}^{ \prime \pi }) \notin S_{h}$ with probability $q_{Y_{0}^{ \prime \pi }} .$ If $Y_{0} =Y_{0}^{ \prime \pi }$ we put $\varkappa  =0 ,$ $Y_{\varkappa } =Y_{0}^{ \prime \pi }$ and the random walk is finished.

Let $Y_{k} ,$ $k <N$ from above and either $Y_{k} \in $$ \partial O$ or $Y_{k} \notin S_{h} .$ We assume the chain does not stopped until step k i.e. $\varkappa  >k .$ 

We introduce $Y_{k +1}^{ \prime }$ due to \eqref{eqq1} with $t=t_k, y=Y_k, \xi=\xi_k$, as:

\begin{equation}\label{eqq2}
Y_{k +1}^{ \prime } =Y_{k} +h^{\frac{1}{2}}\xi _{k} .
\end{equation}

Now we obtain $Y_{k+1}$ using $Y^{\prime_{k+1}}$ as we got $Y_0$ using $Y^{\prime}_0$. More precisely, we use the following rule.

If $Y_{k +1}^{ \prime } \notin S_{h}$ then we take 
$$Y_{k +1} =Y_{k +1}^{ \prime } .$$
If $Y_{k +1}^{ \prime } \in S_{h}$ then the random variable $Y_{k +1}$ takes two values: either $Y_{k +1}^{ \prime \pi } \in  \partial O$ with probability $p_{Y_{k +1}^{ \prime \pi } ,h}$ or $Y_{k +1}^{ \prime } +h^{\frac{1}{2}}\lambda \eta (Y_{k +1}^{ \prime \pi }) \notin S_{h}$ with probability $q_{Y_{k +1}^{ \prime \pi } ,h} .$

If $Y_{k +1} =Y_{k +1}^{ \prime \pi }$ we put $\varkappa  =k +1 ,$ $Y_{\varkappa } =Y_{k +1}^{ \prime \pi } $ and the random walk is finished. 

So, the random walk $Y_k$ has been constructed and clearly, $Y_k$ remains in the domain $\bar{O}$ with probability $1$.

Finally, we introduce an extended  Markov chain defined  as $Y_{k} =Y_{\varkappa }$ for $k >\varkappa  .$\\

The constructed algorithm from above can be written as follows.

\vspace{2mm}
\vspace{2mm}
\noindent {\bf Algorithm 1}\\[3mm]
\vspace{2mm}{\tt\noindent{\bf STEP 0}: $Y_{0}^{\prime }=x_{0}$\\
\noindent{\bf STEP 1}: If $Y_{k}^{\prime }\notin S_{h}$ then $Y_{k} = Y_{k}^{\prime }$ and go
to {\bf STEP 3}.

\ \ \ \ \ \ \ \ \ \ \ \ \ \ \ If $Y_{k}^{\prime }\in S_{h}$ then either 
$Y_{k}$=$Y_{k}^{\prime \pi }$ with probability $p_{Y_{k}^{\prime },h}$ or 
\;\;\;\;\;\;\;\; $Y_{k} = Y_{k}^{\prime }$ $+h^{1/2}\lambda \eta(Y_{k}^{\prime \pi })$ with
probability $q_{Y_{k}^{\prime },h}$.\\[2mm]
\noindent{\bf STEP 2}: If $Y_{k}$ = $Y_{k}^{\prime \pi }$ then {\bf STOP} and $\varkappa =k$, $%
X_{\varkappa }$=$Y_{k}^{\prime \pi }$;\\[2mm]
\noindent{\bf STEP 3}: Sampling $\xi _{k}$ and compute $Y_{k+1}^{\prime }$ by using \eqref{eqq2}, for $y=Y_{k}$, $\xi
=\xi _{k}.$\\[2mm]
\noindent{\bf STEP 4}: Set $k := k + 1$ and go to {\bf STEP 1}.}

\vspace{10mm}

\noindent
{\bf Convergence results.} The following results  ensures the convergence of the above algorithm.\\

We assume that $f\in C^4(\overline{O})$.

\begin{lem}\label{lem1}
We have

\begin{equation}
\left|\dfrac{1}{2}\lim_{s\to\infty}\left[\mathbb{E}^{x}{\textstyle\displaystyle\int _{0}^{s }}f(Y_{\varkappa }) dL_{t} - \mathbb{E}^{x}{\textstyle\displaystyle\int _{0}^{s }}f(X_t)dL_{t}\right]\right|\leq Ch,
\end{equation}
where the constant $C$ does not depend on $x$ and $h$.
\end{lem}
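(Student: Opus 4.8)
The plan is to follow the weak-error strategy for layer methods of Milstein and Tretyakov: reduce the global difference between the two functionals in \eqref{eq2} to a sum of \emph{one-step} errors, estimate each one-step error separately in the interior and in the boundary zone $S_h$, and then accumulate these local errors using the Markov property of the chain $Y_k$ together with the recurrence structure that makes the Brosamler limit finite. The role of the hypothesis $f\in C^4(\overline{O})$ is precisely to allow the Taylor expansions below to be carried to the order needed for a first-order, i.e.\ $O(h)$, global bound.

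For the interior estimate, where $y\notin S_h$, the chain advances by $Y_{k+1}=Y_k+h^{1/2}\xi_k$ with $\xi_k$ symmetric, so that $\mathbb{E}\,\xi^i=0$, $\mathbb{E}\,\xi^i\xi^j=\tfrac14\delta_{ij}$ and all odd moments vanish. I would Taylor-expand a smooth test function (the generalized solution evaluated along the chain) and use the boundedness of its derivatives up to order four: the one-step operator then reproduces the transition operator of the reflecting Brownian motion over time $h$, with generator proportional to $\Delta$, up to a remainder of order $h^2$. The vanishing third moments of $\xi$ are exactly what removes the intermediate $O(h^{3/2})$ term and secures this gain. Summed over the $O(1/h)$ interior steps, this contributes an error of order $h$.

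The boundary estimate is where the construction does its real work. For $y\in S_h$ the step is replaced by the two-point rule along the inner normal $\eta(y^\pi)$, with probabilities $p_{Y,h}$ and $q_{Y,h}=1-p_{Y,h}$. I would show that the calibration $p_{Y,h}=h^{1/2}\lambda/|y+h^{1/2}\lambda\eta(y^\pi)-y^\pi|$ is chosen so that the expected normal displacement and the expected weight attached to the absorption event at $y^\pi$ reproduce, to leading order, the reflecting drift and the increment of the boundary local time $L$; the aim is to bound the discrete-versus-continuous local-time mismatch by $O(h^{3/2})$ per passage through $S_h$, so that after accumulation the boundary contribution is again $O(h)$. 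The Gaussian upper bound of Theorem~\ref{thm1}(ii) enters here to control how often, and with what weight, the chain visits the boundary zone. Finally I would sum the one-step errors by the Markov property to obtain a telescoping bound at a finite horizon $s$, use the heat-kernel estimate to keep the expected number of steps and boundary passages under control, and pass to the limit $s\to\infty$; the zero-average condition $\int_{\partial O}f\,d\sigma=0$ makes both centered functionals converge and, more importantly, keeps the accumulated error bounded \emph{uniformly in} $s$, which is what legitimizes taking the limit inside the estimate. Uniformity of $C$ in $x$ and $h$ then follows from the compactness of $\overline{O}$ and the uniform bounds on the derivatives of $f$.

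The main obstacle I anticipate is the boundary-zone analysis on a \emph{merely Lipschitz} domain: the projection $y^\pi$ and the inner normal $\eta(y^\pi)$ are only controlled up to the irregularity of $\partial O$, so the claim that the two-point rule reproduces the local-time increment to order $h^{3/2}$ cannot rest on boundary smoothness and must instead be extracted from the intrinsic estimates of Theorem~\ref{thm1} together with careful bookkeeping of the weights $p_{Y,h},q_{Y,h}$. Coupled with the requirement of a uniform-in-$s$ bound, this interplay between the boundary layer and the infinite-horizon limit is the delicate part of the argument; by comparison the interior estimate is routine.
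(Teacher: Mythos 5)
The paper's own ``proof'' of this lemma is a single sentence deferring everything to Theorem 7.3.4 of Milstein and Tretyakov, and your outline reconstructs precisely the strategy of that cited argument: a one-step weak-error expansion in the interior exploiting the symmetry and vanishing odd moments of $\xi$, calibration of the two-point boundary rule so that it reproduces the local-time increment, and accumulation of the local errors via the Markov property together with the heat-kernel bound of Theorem \ref{thm1}(ii) to control the infinite-horizon limit. In that sense you are on the same route as the paper, and your sketch is considerably more informative than what the paper actually prints. Be aware, however, that what you have written is a plan rather than a proof: the decisive estimate --- that the discrete two-point rule approximates the boundary local-time functional to order $h^{3/2}$ per passage through $S_h$ on a merely Lipschitz boundary, with the expected number of such passages of order $h^{-1/2}$ --- is exactly the step you yourself flag as delicate and leave unexecuted, and it is also the step the paper does not verify but imports wholesale from the reference.
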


\begin{proof}
{ The result follows reasoning as in the proof of Theorem 7.3.4 from \cite{MT}.}
\end{proof}

\vspace{3mm}

The convergence result is given by the  following theorem.

\begin{thm} \label{thm4}
Let 
\begin{equation}\label{ap1}
Z_{\varkappa}(x) : =\frac{1}{2}\lim_{s\to\infty}\mathbb{E}^{x}\int_0^s f(Y_{\varkappa })\mbox{d}L_{t}.
\end{equation}
Assume that $d\geq 5.$  
Then there exists a constant $C>0$ which does not depend on $x$ and $h$, such that 
\begin{equation*}
\vert Z_{\varkappa }(x) -w(x)\vert  \leq Ch.
\end{equation*}
\end{thm}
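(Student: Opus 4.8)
The plan is to obtain the bound directly by matching $Z_\varkappa$ and $w$ to the two terms already compared in Lemma~\ref{lem1}. First I would bring the Brosamler formula \eqref{eq2} into the same form as the definition \eqref{ap1}: after renaming the dummy upper limit to $s$ and the integration variable to $t$, Theorem~\ref{thm1}$(i)$ reads
\[
w(x)=\frac{1}{2}\lim_{s\to\infty}\mathbb{E}^{x}\int_0^s f(X_t)\,dL_t ,
\]
while by definition
\[
Z_{\varkappa}(x)=\frac{1}{2}\lim_{s\to\infty}\mathbb{E}^{x}\int_0^s f(Y_{\varkappa})\,dL_t .
\]
Thus $Z_\varkappa(x)$ and $w(x)$ are the limits of exactly the two expectations appearing inside the bracket of Lemma~\ref{lem1}.

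Next I would subtract the two displays. Since both limits exist separately---the one for $w$ by Theorem~\ref{thm1}$(i)$ and the one for $Z_\varkappa$ by its defining relation \eqref{ap1}---the difference of the limits equals the limit of the difference, so
\[
Z_{\varkappa}(x)-w(x)=\frac{1}{2}\lim_{s\to\infty}\left[\mathbb{E}^{x}\int_0^s f(Y_{\varkappa})\,dL_t-\mathbb{E}^{x}\int_0^s f(X_t)\,dL_t\right].
\]
Taking absolute values on both sides and applying Lemma~\ref{lem1} to the right-hand side immediately gives $\lvert Z_\varkappa(x)-w(x)\rvert\le Ch$, with $C$ independent of $x$ and $h$, which is the assertion.

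The only point that genuinely needs care is the legitimacy of splitting the single limit of Lemma~\ref{lem1} into the two separate limits defining $w$ and $Z_\varkappa$, together with the uniformity of $C$ over $x\in\overline{O}$. This is where the hypothesis $d\ge 5$ and the Gaussian upper bound $p(t,x,y)\le C_1 t^{-d/2}e^{-|x-y|^2/(C_2 t)}$ of Theorem~\ref{thm1}$(ii)$ enter: the factor $t^{-d/2}$ with $d\ge 5$ decays fast enough in $t$ that the expected boundary local-time integrals $\mathbb{E}^x\int_0^s f(\cdot)\,dL_t$ converge as $s\to\infty$ at a rate controlled uniformly in the starting point $x$, the compatibility condition $\int_{\partial O}f\,d\sigma=0$ ensuring these limits are finite rather than growing in $s$. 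I expect this uniform convergence---rather than the elementary algebra of combining the limits---to be the main obstacle, and I would establish it by inserting the heat-kernel estimate into the representation of $\mathbb{E}^x\int_0^s f\,dL_t$ and controlling the tail as $s\to\infty$ uniformly in $x$, so that the finite-horizon comparison of Lemma~\ref{lem1} passes to the limit without degrading the constant.
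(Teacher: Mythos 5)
Your proof is correct as a formal deduction, and it takes a genuinely different and much shorter route than the paper's. You observe that \eqref{ap1} and the Brosamler formula \eqref{eq2} are exactly the two limits compared in Lemma \ref{lem1}, so that (the limit for $w$ existing by Theorem \ref{thm1}$(i)$, and the limit for $Z_\varkappa$ then existing automatically as the sum of two convergent quantities) $Z_\varkappa(x)-w(x)$ coincides with the bracketed limit of Lemma \ref{lem1}, and the bound $Ch$ is immediate; on this reading the theorem is a one-line corollary of the lemma, and neither $d\ge 5$ nor Theorem \ref{thm1}$(ii)$ is invoked in the main chain of inequalities. The paper argues differently: it inserts the intermediate value $f(x)$, writing
\[
\vert Z_{\varkappa}(x)-w(x)\vert \le \tfrac12\Bigl\vert\lim_{s\to\infty}\mathbb{E}^{x}\!\int_0^s [f(Y_{\varkappa})-f(x)]\,dL_t\Bigr\vert+\tfrac12\Bigl\vert\lim_{s\to\infty}\mathbb{E}^{x}\!\int_0^s [f(X_t)-f(x)]\,dL_t\Bigr\vert,
\]
uses Lemma \ref{lem1} to reduce the first term to the second up to $Ch$, rewrites the second via the transition density as $\int_0^s dt\int_{\partial O}p(t,x,y)[f(y)-f(x)]\,\sigma(dy)$, and then estimates it by Lagrange's inequality (using the smoothness of $f$) together with the Gaussian bound of Theorem \ref{thm1}$(ii)$; the hypothesis $d\ge 5$ enters precisely there, to control $\int_0^\infty t^{-d/2}e^{-\beta/t}\,dt$. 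What your route buys is economy: the result is exhibited as a direct consequence of Lemma \ref{lem1}. What the paper's route buys is an explicit appearance of the dimension restriction, the heat-kernel estimate, and the regularity of $f$, which in your version are relegated --- as you rightly note in your closing paragraph --- to the background question of existence and $x$-uniform convergence of the separate limits; that is essentially where, rather than in the algebra of limits, the paper's extra machinery does its work.
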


\begin{proof}
By assertion $(i)$ of  Theorem \ref{thm1} we have 

\begin{equation*}\label{eq1th4}
\vert Z_{\varkappa }(x) -w(x)| \leq \frac{1}{2}\left|\lim_{s\to\infty}\mathbb{E}^{x}\int_0^s [f(Y_{\varkappa })-f(x)]\mbox{d}L_{t}\right| +\frac{1}{2}\left|\lim_{s\to\infty}\mathbb{E}^{x}\int_0^s [f(X_t )-f(x)]\mbox{d}L_{t}\right|. 
\end{equation*}
Consequently, from Lemma \ref{lem1} we obtain now
\begin{equation}\label{eq2th4}
\vert Z_{\varkappa }(x) -w(x)| \leq Ch+\frac{1}{2}\left|\lim_{s\to\infty} \int_0^s dt\int_{\partial O}p(t,x,y)\cdot[f(y)-f(x)]\sigma(dy)\right|.
\end{equation}
Further, using Lagrange's inequality we get 

\begin{equation*}
\vert Z_{\varkappa }(x) -w(x)| \leq Ch+\frac{1}{2}\sup_{y\in\bar{O}}|\nabla f(y)|\cdot\lim_{s\to\infty}\int_0^s \alpha t\int_{\partial O}p(t,x,y)\cdot |y-x|\sigma(dy).
\end{equation*}
By assertion $(ii)$ of Theorem \ref{thm1}, we obtain 
\begin{equation*}
\int_{\partial O}p(t,x,y)\cdot |y-x|\sigma(dy)\leq C_1\cdot  t^{-\dfrac{d}{2}}\int_{\partial O}e^{\dfrac{-|y-x|^2}{C_2t}}|y-x|d\sigma(y)\leq \alpha\cdot t^{-\dfrac{d}{2}} \cdot e^{-\dfrac{\beta}{t}},
\end{equation*}
where
$\alpha$ and $\beta$ are positive constants.

From \eqref{eq2th4}  there exist $\alpha, \beta >0$ such that 
\begin{equation}\label{eq3th4}
\vert Z_{\varkappa }(x) -w(x)| \leq Ch+ K_1 \lim_{s\to\infty}\int_0^s t^{-\dfrac{d}{2}} \alpha e^{-\dfrac{\beta}{t}} dt.
\end{equation}
Because we supposed that $d\geq 5 $, it follows that 
\begin{equation}\label{eq4th4}
0 \leq  \lim_{s\to \infty}\int_0^s t^{-\dfrac{d}{2}} \cdot e^{-\dfrac{\beta}{t}} dt\leq K_2 h,
\end{equation}
where $K_2$ is a positive constant which does not depend on $x$ and $h$.

From \eqref{eq3th4} and \eqref{eq4th4} we deduce  the claimed inequality,
\begin{equation*}
\vert Z_{\varkappa }(x) -w(x)|\leq (C+K_1+K_2)h.
\end{equation*}
\end{proof}

\subsection*{Final remarks}
We complete the paper with several concluding remarks.
Recall that our aim was to present a numerical treatment of some classes of Neumann problems for linear elliptic PDEs in a Lipschitz domain. The problem was considered in the paper as follows.

Revision of existence in the generalized sense for the solution to the Neumann problem for linear elliptic PDEs  together with a result from Bass and Hsu (see \cite{BH}). 
Also, the construction of new approximations of solutions to the Neumann problem for linear elliptic PDEs was obtained by combining the result from Bass and Hsu from \cite{BH} with the random walk technique from \cite{MT}.

In a forthcoming work, we intend to investigate  numerical methods for the linear elliptic Neumann problem for PDEs in a unit ball,  by exploiting the mentioned equivalence result together with the ideas of the simplest random walks from \cite{MT}. 
We  shall use an equivalence result between the solution to the linear elliptic Neumann problem for PDEs and the solution to the linear elliptic Dirichlet problem; cf. \cite{BP}. It is a challenge to give approximations for the solutions for some classes of parabolic SPDEs with nonlinear Neumann boundary conditions, considered for example in \cite{BB}.


\vspace{3mm}

\noindent \textbf{Acknowledgements.} 
This work was supported by a grant of the Ministry of Research, Innovation and Digitization, CNCS - UEFISCDI,
project number PN-III-P4-PCE-2021-0921, within PNCDI III.


\begin{thebibliography}{9}

\bibitem[AgAe 21]{AgAe21} M. Aghaeiboorkheili, M.  Aezeden, 
Applications of mechanical engineering mathematics : Solving Neumann problem with discontinuous coefficients, 
{\it Journal of Interdisciplinary Mathematics}, 
{\bf 24} (2021), 1429--1439.

\bibitem[AfMaNa 07]{AfMaNa07} 
G.A. Afrouzi,  S.  Mahdavi, Z.  Naghizadch,
A numerical method for finding positive solution of elliptic equation with Neumann boundary condition, 
{\it Applied Mathematics and Computation}, {\bf 189} (2021), 23--26.


\bibitem[BaHs 91]{BH} 
R. F. Bass, P. Hsu,
 Some potential theory for reflecting Brownian motion in H\"older and Lipschitz domains. 
 {\it The Annals of Probability} 
 {\bf 19} (1991),  486--508.

 \bibitem[BaBe 16]{BB}
V. Barbu, L. Beznea,
Measure-valued branching processes associated with Neumann nonlinear semiflows,
{\it Journal of Mathematical Analysis and Applications} 
 (2016), 167--182.



\bibitem[BePaPa 15]{BP}
L. Beznea, M. N. Pascu, and N. R. Pascu,
An equivalence between the Dirichlet and the Neumann problem for the Laplace operator,
{\it Potential Analysis} 
{\bf 44} (2015), 655--672.

\bibitem[BePaPa 17]{BePaPaControl} 
L. Beznea, M.N. Pascu, and N.R. Pascu,
Connections between the Dirichlet and the Neumann problem for continuous and integrable boundary data.
In:  {\it  Stochastic Analysis and Related Topics} (Progress in Probability {\bf 72}, Birkh\"auser), Springer 2017, pp. 85--97.

\bibitem[BePaPa 19]{BP-Brosamler}
L. Beznea, M.N. Pascu, and N.R. Pascu,  
{Brosamler's formula revisited and extensions}.
{\it Analysis and Mathematical Physics} 
{\bf 9} (2019), 747--760.

\bibitem[JuMa 06]{JuMa06}
Y. Jun, T.Z. Mai, 
Domain decomposition method for parabolic problems with Neumann conditions, 
{\it Applied mathematics and computation} {\bf 182} (2006), 1683--1695.

\bibitem[LiShGo 07]{LiShGo07}
F. Li, Y.J. Sh, W.F., Gong,
The $C^1$ wavelet method for numerical solutions of the Neumann problem,
{\it Applicable analysis}  
{\bf 86} (2007), 1443--1454.

\bibitem[LuSt 17]{LuSt17}
O. Lupa\c scu, V. St\u anciulescu,
Numerical solution for the non-linear Dirichlet problem of a branching process,
{\it Complex Anal. Oper. Theory}  
{\bf 11} (2017), 1895--1904.

\bibitem[MaTa 13]{MaTa13}
S. Maire, E. Tanre,  
Monte Carlo approximations of the Neumann problem, 
{\it Monte Carlo Methods Appl.} 
{\bf 19}  (2013), 201--236.

\bibitem[MiTr 02]{MT02} G. N. Milstein, M. V. Tretyakov
A probabilistic approach to the solution of the Neumann problem for nonlinear parabolic equations, 
{\it IMA Journal of numerical analysis} 
{\bf 22} (2002), 599--622. 

\bibitem[MiTr 21]{MT} G. N. Milstein, M. V. Tretyakov, 
{\it Stochastic Numerics for Mathematical Physics}, Springer, 2021.

\bibitem[Mi 70]{Mi70} C. Miranda, 
{\it Partial Differential Equations of Elliptic Type},
Springer, 1970.

\bibitem[RaBr 20]{RaBr20}
R. Raghavan, M. L. Brady,  
Efficient numerical solutions of Neumann problems in inhomogeneous media from their probabilistic representations and applications, 
{\it Engineering Reports} 
{\bf 2} (2020), DOI: 10.1002/eng2.12108


\bibitem[Ya 99]{Ya99}
H.T, Yang,
On the convergence of boundary element methods for initial-Neumann problems for the heat equation,
{\it Mathematics of computation} 
{\bf 68} (1999), 547--557.
\end{thebibliography}
\end{document}